\theoremstyle{plain}
\newtheorem{theorem}{Theorem}
\newtheorem{corollary}[theorem]{Corollary}
\newtheorem{proposition}[theorem]{Proposition}
\theoremstyle{definition}
\newtheorem{definition}[theorem]{Definition}
\newtheorem{remark}[theorem]{Remark}
\newtheorem{property}[theorem]{Property}
\newtheorem{observation}[theorem]{Observation}
\def\Ab{{\mathsf A}}
\def\Cb{{\mathsf C}}
\def\Gb{{\mathsf G}}
\def\Ib{{\mathsf I}}
\def\Lb{{\mathsf L}}
\def\Mb{{\mathsf M}}
\def\Qb{{\mathsf Q}}
\def\Rb{{\mathsf R}}
\def\Sb{{\mathsf S}}
\def\Ub{{\mathsf U}}
\def\Wb{{\mathsf W}}
\def\Yb{{\mathsf Y}}
\def\cc{{\mathbf c}}
\def\e{{\mathbf e}}
\def\r{{\mathbf r}}
\def\x{{\mathbf x}}
\def\y{{\mathbf y}}
\def\T{^\top}
\title{Uncertain standard quadratic optimization\\ under distributional assumptions:\\
a chance-constrained epigraphic approach}
\author{Immanuel M. Bomze \thanks{VGSCO, Research Network Data Science and Faculty of Mathematics, University of Vienna, Oskar-Morgenstern-Platz 1, A-1090 Vienna, Austria. E-mail: immanuel.bomze@univie.ac.at} \and Daniel de Vicente\thanks{VGSCO and University of Vienna, Oskar-Morgenstern-Platz 1, A-1090 Vienna, Austria. Corresponding author. E-mail: a12032143@unet.univie.ac.at} }
\date{April 8, 2025}
\begin{document}
\maketitle

\begin{abstract} 

 \noindent The standard quadratic optimization problem (StQP) consists of minimizing a quadratic form over the standard simplex. Without convexity or concavity of the quadratic form, the StQP is NP-hard. This problem has many relevant real-life applications ranging from portfolio optimization to pairwise clustering and replicator dynamics. 

\noindent Sometimes, the data matrix is uncertain. We investigate models where the distribution of the data matrix is known but where both the StQP after realization of the data matrix and the here-and-now problem are indefinite. We test the performance of a chance-constrained epigraphic StQP to the uncertain StQP.
\end{abstract}
\textbf{Keywords:} Stochastic optimization, Quadratic optimization, Chance constraints, Gaussian Orthogonal Ensemble\newline
\textbf{MSC(2020) Classification:} 90C20, 90C15, 90C26

\section{Introduction}\label{sec:intro}

The standard quadratic optimization problem (StQP) consists of minimizing a quadratic form over the standard simplex
\begin{equation*}
\ell(\Qb):=\min_{\x \in \Delta} \,\x^{\top}\Qb\x {\color{black}\, ,}
\end{equation*}
where $\Qb \in \mathbb{R}^{n\times n}$ is a symmetric matrix, and $\Delta:= \{\x \in \mathbb{R}^n: \e^{\top}\x = 1, \x \geq \boldsymbol{0}\}$ is the standard simplex in $\mathbb{R}^n$. Here $\e\in \mathbb{R}^n$ is the vector of all ones and ${}^{\top}$ denotes transposition; $\Ib_n:= {\rm Diag} (\e)$ denotes the $n\times n$ identity matrix.  The objective function is already in general form since any general quadratic objective function $\x^{\top}\Ab\x + 2\cc^{\top}\x$ can be written in homogeneous form by defining the symmetric matrix $\Qb:= \Ab + \cc\e^{\top} + \e\cc^{\top}$.

Even though the StQP is simple - minimization of a quadratic function under linear constraints - it is NP-hard without assumptions on the definiteness of the matrix $\Qb${\color{black}. O}bserve that convex, but also concave instances are polynomially solvable, the latter even in closed form: $\ell(\Qb) = \min_{i} \Qb_{ii}$. Note that $\ell(\Qb)\ge 0$ is possible even if $\Qb$ is not positive semi-definite. In fact, the condition $\ell(\Qb)\ge 0$ characterizes copositivity~\cite{ShakedMondererBermanBook2021} of $\Qb$, and follows if no entry of $\Qb$ is negative (as, e.g. in the instances $\Qb_i^{\rm (nom)}$  generated in Section~\ref{sec:numerical_experiments} below). 

Irrespective of the sign of $\ell(\Qb)$, its calculation can be hard for indefinite instances: indeed, Motzkin and Straus \cite{motzkin-straus-1965} showed that the maximum clique problem, a well-known NP-hard problem, can be formulated as an StQP. Hence, the StQP is often regarded as the simplest of hard problems \cite{bomze2018complexity} since it contains the simplest non-convex objective function which is a quadratic form, and the simplest polytope as feasible set. Still, the StQP is a very flexible optimization class that allows for modelling of diverse problems such as portfolio optimization problems  \cite{markowitz-1952}, pairwise clustering \cite{pavan2003dominant} and replicator dynamics \cite{bomze1998standard}. Despite of its continuous optimization nature, it also serves to model discrete problems like the maximum-clique problem as well, as witnessed by above references.

The only data required to fully characterize an StQP is the data matrix $\Qb$. However, in many applications the matrix $\Qb$ is uncertain. StQPs with uncertain data have been explored in the literature. One of the most natural ways to deal with uncertain objective functions {\color{black}is via robust optimization \cite{ben2009robust}. In that paradigm, the decision-maker has to decide upon an uncertainty set $\mathcal U$ which encapsulates all the known information about the uncertain parameter. The uncertain parameter (in this case the uncertain data matrix $\Qb$) is suppossed to reside within the uncertainty set $\mathcal U$, as violations are not allowed \cite{gorissen2015practical}}.  Bomze et al.~\cite{Bomz20a} introduced the concept of a robust standard quadratic optimization problem, which they formulated as a minimax problem
\begin{equation}\label{robust_StQP}
\min_{\x \in \Delta} \max_{\Ub \in \mathcal U} \,\x^{\top}(\Qb^{\rm (nom)} + \Ub)\x
\end{equation}
with uncertainty set $\mathcal U$. The uncertain matrix $\Qb$ consisted of a nominal part $\Qb^{\rm (nom)}$ and an uncertain additive perturbation $\Ub$. In their paper, the authors investigated various uncertainty sets and proved that the copositive relaxation gap is equal to the minimax gap. Moreover, they observed that the robust StQP \eqref{robust_StQP} reduces to a deterministic StQP for many frequently used types of uncertainty sets $\mathcal U$. 

Passing from a robust to stochastic setting with known expectation, a natural alternative to get rid of the uncertainty is the here-and-now problem  (random quantities are designated by a tilde sign)
\begin{equation}\label{here-and-now_StQP}
\min_{\x \in \Delta} \, \mathbb E [\x^{\top}\widetilde \Qb\x] = \min_{\x \in \Delta} \,\x^{\top}\mathbb E [\widetilde \Qb]\x
\end{equation}
where the uncertain matrix $\widetilde \Qb$ is replaced by its expectation $\mathbb E [\widetilde \Qb]$. 

Bomze et al. \cite{bomze2022two} investigated a two-stage setting where the principal submatrix was deterministic and the rest of the entries followed a known probability distribution. {\color{black}In this paper, we propose an alternative to the here-and-now problem. As opposed to \cite{bomze2022two} we will} assume that the full data matrix $\widetilde \Qb$ is random according to a known distribution $\mathbb P$.
The purpose of this note is to introduce, apparently for the first time, chance constraints for this problem class by introduction of an epigraphic variable, and moreover, to present a deterministic equivalent StQP formulation under reasonable distributional assumptions. {\color{black}Furthermore}, we establish a close connection of our new model to robustness with Frobenius ball uncertainty sets.

\section{Chance-constrained epigraphic models of random StQPs}\label{sec:StQP_cce}

\subsection{
Definition and Value-at-Risk}

\begin{definition}[Chance-Constrained Epigraphic Standard Quadratic Optimization Problem (CCEStQP)]
Let $\widetilde \Qb$ be a random symmetric matrix with known distribution $\mathbb P$ and let $\alpha \in (0,1)$ be a given confidence level. Then the CCEStQP is defined by the problem
\begin{equation}\label{StQP_cce}\tag{CCEStQP}
\begin{array}{lcl}
\ell^{\rm (cce)}_{\mathbb P, \alpha} :=&  \underset{\x,t}{\min} & t\\
&\text{s.t.} & \mathbb P[\x^{\top}\widetilde \Qb\x \leq t] \geq \alpha\\
&& \x \in \Delta\, ,\, t\in \mathbb{R}\, .
\end{array}
\end{equation}
\end{definition}

The reason for the name of CCEStQP can be explained as follows. It is well known that any optimization problem 
\begin{equation*}
\min_{\x \in \mathcal{X}} \, f(\x)
\end{equation*}
can be rewritten equivalently in epigraphic form by introducing an extra variable $t$
\begin{subequations}
\begin{align}
\underset{\x,t}{\min} & \quad t \nonumber\\
\text{s.t.} & \quad f(\x) \leq t \label{soft_constraint}\\
& \quad \x \in \mathcal{X}, \, t\in \mathbb{R}. \label{hard_constraint}
\end{align}
\end{subequations}

\noindent Now suppose that $f:\mathbb R^n \times \mathbb R^m \to \mathbb R$ is a random objective function that depends on the decision variable $\x \in \mathcal X \subseteq \mathbb R^n$ and on the random vector $\boldsymbol{\tilde \xi} \in \Xi \subseteq \mathbb R^m$. For every fixed $\x \in \mathcal X$, $f(\x,\boldsymbol{\tilde \xi})$ is a random variable with a distribution $\mathbb P$. Then we can regard constraint \eqref{soft_constraint} as a soft constraint and request feasibility with probability of at least $\alpha$. Constraint \eqref{hard_constraint} remains a hard constraint and must be satisfied for all feasible points. This has a familiar interpretation familiar in risk management as the so-called Value-at-Risk (VaR) of $f(\x,\boldsymbol{\tilde \xi})$ at $\x$ with confidence level $\alpha \in (0,1)$,  defined as~\cite{hull2012risk}  
\begin{equation*}
\text{VaR}_{\alpha}(\x):=\min \, \{ t: \mathbb P[f(\x,\boldsymbol{\tilde \xi}) \leq t] \geq \alpha\}.
\end{equation*}
The problem of minimizing VaR~\cite{Larsen2002} is 
\begin{equation}\label{prob_min_var}
\begin{array}{lcl}
\underset{\x \in \mathcal X}{\min}\,\text{VaR}_{\alpha}(\x) =& \underset{\x,t}{\min} & t\\
&\text{s.t.} & \mathbb P[f(\x,\boldsymbol{\tilde \xi}) \leq t] \geq \alpha\\
&& \x \in \mathcal X, \, t\in \mathbb{R}\, .
\end{array}
\end{equation}

El Ghaoui et al.~\cite{ghaoui2003worst} investigated problem~\eqref{prob_min_var} in the context of worst-case VaR for portfolio optimization with a return function
$$
f(\x,\boldsymbol{\tilde \xi}) := \x^{\top}\boldsymbol{\tilde \xi}
$$
where $\boldsymbol{\tilde \xi}$ was a vector of returns. 
Inspired by the VaR and passing from linear to quadratic expressions in the decision variable $\x$, we want to find the smallest number $t \in \mathbb R$ such that $\x^{\top}\widetilde \Qb\x \leq t$ with probability at least $\alpha$, where $\alpha \in (0,1)$ is a confidence level provided by the decision-maker. 

\subsection{Distributional models for random (indefinite) symmetric matrices}

We are interested in instances of \eqref{StQP_cce} where the random matrix $\widetilde \Qb$ is indefinite. Two common and simple ways of generating  an indefinite random symmetric matrix $\widetilde \Qb$ are:
\begin{enumerate}[label=(\roman*)]
    \item Generate a random $n\times n$ matrix $\widetilde \Rb$ and put $\widetilde\Qb :=\Qb^{\rm (nom)} + \frac{\beta}{\sqrt{2}}(\widetilde \Rb+\widetilde\Rb^{\top})$, for some nominal matrix $\Qb^{\rm (nom)}$ and a suitable parameter $\beta$.
    \item Generate a random $n\times p$ matrix $\widetilde \Yb$ and put $\widetilde\Qb :=\widetilde\Yb\widetilde\Yb^{\top} - \eta\,\Ib_n$. The parameter $\eta >0$ has to be large enough such that the positive semi-definite matrix $\widetilde\Yb\widetilde\Yb^{\top}$ becomes indefinite after subtraction of $\eta\,\Ib_n$, but not too large to avoid negative-definite matrices $\widetilde \Qb$.
\end{enumerate}

In case that $\widetilde \Rb$ or $\widetilde \Yb$ are normal with independent and identically distributed (i.i.d.) entries we obtain the shifted and scaled Gaussian Orthogonal Ensemble in the first case and the shifted Wishart Ensemble in the second case~\cite{geman1980limit,marchenko1967distribution,silverstein1985smallest}. {\color{black} We formalize this observation in the following definitions.

\begin{definition}
\begin{enumerate}[label=(\roman*)] 
\item Let $\widetilde \Gb = (\widetilde g_{ij})$ be a random symmetric matrix such  that $\widetilde g_{ii}$ are i.i.d. with $\widetilde g_{ii} \sim \mathcal{N}(0,2)$, for $i=1,\dots,n$ and $\widetilde g_{ij}$ are i.i.d. standard normal random variables, $\widetilde g_{ij} \sim \mathcal{N}(0,1)$, for $1\leq i < j \leq n$, then $\widetilde \Gb$ is called a Gaussian Orthogonal Ensemble (GOE) matrix.
\item Let $\Qb^{\rm (nom)} \in \mathbb R^{n\times n}$ be a nominal matrix, $\beta \neq 0$ and $\widetilde \Gb$ be a GOE matrix, then the GOE perturbation model is defined as 
\begin{equation}\label{def_Q_normal}
    \widetilde \Qb = \Qb^{\rm (nom)} + \beta \, \widetilde \Gb\,.
\end{equation}
\end{enumerate}
\end{definition}

\begin{definition}
\begin{enumerate}[label=(\roman*)] 
\item Let $\boldsymbol{\Sigma}\in \mathbb R^{n\times n}$ be a positive-definite matrix and \sloppy$\widetilde\y_1,\dots,\widetilde\y_p\sim \mathcal N_n(\boldsymbol{0},\boldsymbol{\Sigma})$ be i.i.d. random vectors from the $n$-variate normal distribution with mean $\boldsymbol{0}$ and covariance matrix $\boldsymbol{\Sigma}$. Set $\widetilde \Yb = (\widetilde\y_1,\dots,\widetilde\y_p)$, then the matrix $\widetilde \Wb = \widetilde \Yb \widetilde \Yb^{\top}$ is said to have a Wishart distribution with covariance matrix $\boldsymbol{\Sigma}$ and $p$ degrees of freedom.

\item Let $\widetilde \Wb$ be a Wishart matrix  and $\eta > 0$, then the shifted Wishart model is defined as 
\begin{equation}\label{def_Q_wishart}
    \widetilde \Qb = \widetilde \Wb - \eta \, \Ib_n\,.
\end{equation}
\end{enumerate}
\end{definition}

For these distributions, we derive the following location-scale property of the distributions of the quadratic forms $\x\T\widetilde \Qb\x$.} 

\begin{proposition}\label{propgoe}
{\color{black} For the GOE perturbation model \eqref{def_Q_normal} and the shifted Wishart model \eqref{def_Q_wishart} there are two symmetric $n\times n$ matrices $\Mb$ and $\Sb$, the latter satisfying $\x\T\Sb\x>0$ for all $\x\in \Delta$, and a continuous cumulative distribution function (cdf) $F$ which is strictly increasing on $\{ t\in \mathbb R : F(t)>0\}$, such that
$$
\mathbb P[\x^{\top}\widetilde \Qb \x \leq t] = F\left(\frac{t - \mu(\x)}{\sigma(\x)}\right)\,,
$$
where 
$$
\mu(\x) := \x^{\top}\Mb\x \quad  \text{  and  } \quad \sigma(\x) := \x^{\top}\Sb\x > 0
$$
are location and scale parameters, respectively.}\end{proposition}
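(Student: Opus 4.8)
The statement asserts two things: that for each fixed $\x \in \Delta$ the random variable $\x^\top \widetilde\Qb \x$ has a distribution of location-scale type with location and scale given by quadratic forms in $\x$, and that the normalizing cdf $F$ is continuous and strictly increasing on its support. I would treat the two distributional models separately since the underlying random variable is genuinely different (Gaussian in one case, a shifted chi-square-type quantity in the other), and then observe that in each case the claimed structure emerges from an elementary computation.

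For the GOE perturbation model, I would write $\x^\top \widetilde\Qb\x = \x^\top\Qb^{(\mathrm{nom})}\x + \beta\,\x^\top\widetilde\Gb\x$. The key linear-algebra fact is that $\x^\top\widetilde\Gb\x = \sum_i \widetilde g_{ii} x_i^2 + 2\sum_{i<j}\widetilde g_{ij}x_ix_j$ is a linear combination of jointly Gaussian (in fact independent) centered normals, hence Gaussian with mean $0$. Its variance is $\sum_i 2 x_i^4 + 4\sum_{i<j} x_i^2 x_j^2 = 2\sum_{i,j} x_i^2 x_j^2 = 2(\x^\top\x)^2\cdot(\dots)$ — more precisely $2\|\x\circ\x\|^2$-type expression; writing it as a quadratic form, $\mathrm{Var}(\x^\top\widetilde\Gb\x) = 2\,\x^\top(\mathrm{Diag}(\x)^2 + \text{off-diagonal terms})\x$, which one checks equals $2(\x^\top\x)^2$ is false in general — I would instead verify it equals $\x^\top\Sb_0\x$ for the symmetric matrix $\Sb_0$ with entries $(\Sb_0)_{ij} = $ (appropriate product of coordinates), and note $\x^\top\Sb_0\x = 2\sum_{i,j}x_i^2x_j^2 > 0$ on $\Delta$. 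Then $\x^\top\widetilde\Qb\x \sim \mathcal N(\mu(\x), \sigma(\x)^2)$ with $\Mb := \Qb^{(\mathrm{nom})}$, $\Sb := \beta^2\Sb_0$ (scaled so that $\sigma(\x) = \x^\top\Sb\x$ plays the role of the variance, or its square root — I would match the convention in the statement carefully), and $F = \Phi$, the standard normal cdf, which is manifestly continuous and strictly increasing everywhere.

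For the shifted Wishart model, I would use that $\x^\top\widetilde\Qb\x = \x^\top\widetilde\Yb\widetilde\Yb^\top\x - \eta\,\x^\top\x = \sum_{k=1}^p (\widetilde\y_k^\top\x)^2 - \eta\|\x\|^2$. Since $\widetilde\y_k \sim \mathcal N_n(\boldsymbol 0,\sSigma)$ i.i.d., the scalars $\widetilde\y_k^\top\x$ are i.i.d. $\mathcal N(0,\x^\top\sSigma\x)$, so $\sum_k(\widetilde\y_k^\top\x)^2 = (\x^\top\sSigma\x)\cdot Z$ where $Z \sim \chi^2_p$. Hence $\x^\top\widetilde\Qb\x = (\x^\top\sSigma\x)Z - \eta\x^\top\x$, giving the location-scale form with $\sigma(\x) := \x^\top\sSigma\x > 0$ (positive definiteness of $\sSigma$ is exactly what guarantees this on $\Delta$, indeed on all of $\mathbb R^n\setminus\{0\}$), location $\mu(\x) := -\eta\,\x^\top\x$ so $\Mb := -\eta\,\Ib_n$, and $F$ the cdf of $Z$, i.e. the $\chi^2_p$ cdf. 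The $\chi^2_p$ cdf is continuous and strictly increasing on $(0,\infty) = \{t : F(t)>0\}$, as required. One small mismatch to reconcile: the proposition wants a single $F$, whereas here the scale multiplies a chi-square rather than being an affine rescaling of the argument; this is fine because $\mathbb P[(\x^\top\sSigma\x)Z - \eta\|\x\|^2 \le t] = \mathbb P[Z \le (t+\eta\|\x\|^2)/(\x^\top\sSigma\x)] = F\big((t-\mu(\x))/\sigma(\x)\big)$ with $\mu(\x) = -\eta\|\x\|^2$, $\sigma(\x)=\x^\top\sSigma\x$, exactly the claimed identity.

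The main obstacle is bookkeeping rather than conceptual: I must (a) get the variance computation for the GOE case right — in particular the factor-of-2 on the diagonal of a GOE matrix is exactly what makes $\mathrm{Var}(\x^\top\widetilde\Gb\x)$ come out as a clean quadratic form, and I should present $\Sb$ explicitly and verify $\x^\top\Sb\x>0$ on $\Delta$ (it is, since the expression is a sum of squares not all zero when $\e^\top\x=1$); and (b) reconcile the two models under one notational umbrella, deciding consistently whether $\sigma(\x)$ denotes a standard deviation (GOE) or a scale factor multiplying a standardized variable (Wishart) — the proposition's phrasing "$F\big((t-\mu(\x))/\sigma(\x)\big)$" accommodates both provided $F$ is chosen as the standardized cdf appropriate to each model, so I would simply state that $(\Mb,\Sb,F)$ is exhibited case by case. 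No deep tools are needed beyond the definition of the Gaussian and chi-square distributions and the elementary fact that linear images of Gaussian vectors are Gaussian.
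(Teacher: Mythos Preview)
Your treatment of the shifted Wishart model is correct and matches the paper's argument (the paper phrases it via the gamma distribution with shape $p/2$ and scale $2\x^\top\sSigma\x$, taking $\Sb = 2\sSigma$, but this is the same computation as your $\chi^2_p$ formulation with $\Sb=\sSigma$).

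The GOE case, however, contains a real error. Your variance computation
\[
\mathrm{Var}(\x^\top\widetilde\Gb\x)=\sum_i 2x_i^4 + 4\sum_{i<j}x_i^2x_j^2
\]
is correct, but this expression \emph{does} simplify to $2\bigl(\sum_i x_i^2\bigr)^2 = 2\|\x\|_2^4 = 2(\x^\top\x)^2$; your claim that this ``is false in general'' is mistaken. More importantly, you then attempt to write the variance as $\x^\top\Sb_0\x$ for some fixed symmetric matrix $\Sb_0$, asserting $\x^\top\Sb_0\x = 2\sum_{i,j}x_i^2x_j^2$. That identity is impossible: the right-hand side is a homogeneous quartic in $\x$, not a quadratic form, so no constant matrix $\Sb_0$ can produce it.

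The point you are missing --- and which the paper uses --- is that one should express the \emph{standard deviation}, not the variance, as a quadratic form. Since $\mathrm{Var}(\beta\,\x^\top\widetilde\Gb\x)=2\beta^2\|\x\|_2^4$, the standard deviation is $\sqrt{2}\,|\beta|\,\|\x\|_2^2 = \sqrt{2}\,|\beta|\,\x^\top\Ib_n\x$, which \emph{is} a quadratic form with $\Sb = \sqrt{2}\,|\beta|\,\Ib_n$. With $\Mb = \Qb^{(\mathrm{nom})}$ and $F=\Phi$ this gives exactly the location--scale representation claimed. Once you see this, the ``obstacle (a)'' you flag about presenting $\Sb$ explicitly disappears: $\Sb$ is just a positive multiple of the identity, and $\x^\top\Sb\x>0$ on $\Delta$ is immediate.
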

\begin{proof}
{\color{black} We first deal with the 
GOE perturbation model \eqref{def_Q_normal}. Obviously, } for any decision vector $\x\in \mathbb R^n$, the quadratic form $\x^{\top}\widetilde \Gb \x$ is a normally distributed random variable with mean 0 and variance $2||\x||_2^4$, i.e. 
$$
\x^{\top}\widetilde \Gb \x \sim \mathcal{N}(0,2||\x||_2^4)\, .
$$
{\color{black}Now putting
$\Mb = \Qb^{\rm (nom)}$, $\Sb = \beta\,\Ib_n$ with $\beta >0$ and choosing $F$ as the
as the cdf of the standard normal distribution (denoted by $\Phi$), we see that also in this case the assertion holds true.\\
Now we turn to the shifted Wishart model \eqref{def_Q_wishart}. } We have that $\x^{\top}\widetilde\Wb\x$ is a gamma distributed random variable with shape $k= p/2$ and scale $\theta= 2\x^{\top}\boldsymbol{\Sigma}\x$, where $p$ determines the shape of $F$. {\color{black} The matrices are chosen as $\Mb = -\eta\,\Ib_n$ and $\Sb = 2\boldsymbol{\Sigma}$.}
\end{proof}

{\color{black} Before we proceed with our results, let us highlight the real-world relevance of the GOE perturbation model which has been extensively explored in the literature, see \cite{aizenman2017matrix, shapiro2016level}.  A GOE perturbation model \eqref{def_Q_normal}} has several applications, among them portfolio optimization, where the portfolio consists of $n$ assets with historical mean returns $\r$ and historical covariance matrix $\Cb$. {\color{black}{In that setting, the vector $\x = (x_1,\dots,x_n)^{\top}$ represents the allocation of the total budget (normalized to 1) to the assets. In particular, the entry $x_i$ represents the portion of the budget devoted to the $i$-th asset. The goal is to find an allocation vector $\x$} which} minimizes the expected risk and maximizes the expected return
\begin{equation}\label{mean-variance}
\min_{\x \in \Delta} \,\x^{\top}\Cb\x - \r^{\top}\x\, .
\end{equation}
Note that in this model, short-selling is not allowed. By definition of the feasible set $\Delta$, problem \eqref{mean-variance} is equivalent to the StQP
\begin{equation*}
\min_{\x \in \Delta} \,\x^{\top}\Qb^{\rm (nom)}\x\, ,
\end{equation*}
where 
$$
\Qb^{\rm (nom)}:= \tfrac{1}{2}(\Cb- \r \e^{\top} - \e \r^{\top})\,.
$$  
Suppose that in the estimation of the covariance matrix $\Cb$ and expected return vector $\r$ measurement errors were made so that we replace the matrix $\Qb^{\rm (nom)}$ by an additive perturbation with a GOE matrix $\widetilde \Gb$ and model the mean-variance portfolio optimization problem as an uncertain StQP
\begin{equation*}
\min_{\x \in \Delta} \,\x^{\top}(\Qb^{\rm (nom)} + \beta \, \widetilde \Gb)\x\, ,
\end{equation*}
where $\beta$ is a parameter denoting the amplitude of the perturbation.

It is clear that under assumption \eqref{def_Q_normal} the here-and-now problem \eqref{here-and-now_StQP} consists of solving the following problem
\begin{equation}\label{StQP_GOE_han}
\min_{\x \in \Delta} \, \x^{\top}\Qb^{\rm (nom)}\x.   
\end{equation}
Since no assumptions on the definiteness of $\Qb^{\rm (nom)}$ are made, the here-and-now problem \eqref{StQP_GOE_han} can be potentially indefinite.

\section{A distributional assumption and main results}

\subsection{A reasonable distributional assumption on random StQPs}
{\color{black} As motivated by above observations, it is reasonable} to suppose that the general distribution $\mathbb P$ of $\widetilde \Qb$ enjoys the following {\color{black}location/scale type property for the quadratic forms.}

\begin{property}\label{property}
{\color{black} Consider two symmetric $n\times n$ matrices $\Mb$ and $\Sb$, the latter satisfying $\x\T\Sb\x>0$ for all $\x\in \Delta$, and a continuous cdf $F$ which is strictly increasing on $\{ t\in \mathbb R : F(t)>0\}$.}\\
 \sloppy For all  $\x\in \Delta$, the distribution of $\x^{\top}\widetilde \Qb \x$ is of location/scale type:
$$
\mathbb P[\x^{\top}\widetilde \Qb \x \leq t] = F\left(\frac{t - \mu(\x)}{\sigma(\x)}\right)\,,
$$
with {\color{black} location parameter $\mu$ and scale parameter $\sigma$} depending on $\x$ in the following way:
$$
\mu(\x) := \x^{\top}\Mb\x, \quad  \sigma(\x) := \x^{\top}\Sb\x > 0\,.
$$
\end{property}

\begin{remark} \textit{\color{black} Not every location/scale family of distributions of quadratic forms satisfies Property~\ref{property}, but as observed in Proposition~\ref{propgoe}, the most common distributional models indeed enjoy Property~\ref{property}.}
\end{remark}

\subsection{Deterministic reformulation}

\noindent {\color{black} Now we show that Property~\ref{property} ensures that CCEStQP can be reformulated as a deterministic StQP.}
\begin{theorem} \label{mainthm} {\color{black} Assume that $\mathbb P$ fulfills Property~\ref{property} for some $(F, \Mb,\Sb)$  as outlined there. Then} the chance-constrained epigraphic StQP~\eqref{StQP_cce} is equivalent to a deterministic StQP. In particular,
$$
\ell^{\rm (cce)}_{\mathbb P, \alpha} = \min_{\x \in \Delta} \, \x^{\top}\overline{\Qb}\x \,,
$$
with $\overline \Qb:=\Mb + F^{-1}(\alpha)\Sb$.
\end{theorem}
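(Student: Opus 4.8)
The plan is to use Property~\ref{property} to turn the probabilistic constraint into an explicit affine inequality in the epigraphic variable $t$, and then to optimise $t$ out in closed form. Fix $\x\in\Delta$. By Property~\ref{property} the constraint $\mathbb P[\x\T\widetilde\Qb\x\le t]\ge\alpha$ is exactly $F\!\left(\tfrac{t-\mu(\x)}{\sigma(\x)}\right)\ge\alpha$. I would first record the elementary properties of the quantile $F^{-1}(\alpha):=\inf\{s\in\mathbb R:F(s)\ge\alpha\}$ that follow from the hypotheses on $F$: since $F$ is a cdf with $F(s)\to 0$ as $s\to-\infty$ and $\alpha>0$, the set $\{s:F(s)\ge\alpha\}$ is bounded below, and since $F(s)\to 1$ and $\alpha<1$ it is nonempty, so $F^{-1}(\alpha)$ is a finite real number for every $\alpha\in(0,1)$. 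Continuity of $F$ makes $\{s:F(s)\ge\alpha\}$ closed and monotonicity makes it upward closed, hence it equals $[F^{-1}(\alpha),\infty)$; strict monotonicity of $F$ on $\{F>0\}$ additionally gives $F(F^{-1}(\alpha))=\alpha$, so $F^{-1}$ acts as a genuine inverse at $\alpha$. Consequently $F(s)\ge\alpha\iff s\ge F^{-1}(\alpha)$, and because $\sigma(\x)=\x\T\Sb\x>0$ on $\Delta$ the chance constraint is equivalent to
$$
t\ \ge\ \mu(\x)+F^{-1}(\alpha)\,\sigma(\x)\,.
$$

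Substituting this back, the feasible region of \eqref{StQP_cce} becomes $\{(\x,t):\x\in\Delta,\ t\ge\mu(\x)+F^{-1}(\alpha)\sigma(\x)\}$; since the objective is $t$, for each fixed $\x$ the minimising choice is $t=\mu(\x)+F^{-1}(\alpha)\sigma(\x)$, whence
$$
\ell^{\rm (cce)}_{\mathbb P,\alpha}=\min_{\x\in\Delta}\bigl(\x\T\Mb\x+F^{-1}(\alpha)\,\x\T\Sb\x\bigr)=\min_{\x\in\Delta}\,\x\T\overline\Qb\x\,,
$$
with $\overline\Qb=\Mb+F^{-1}(\alpha)\Sb$, which is a deterministic StQP (the matrix $\overline\Qb$ is symmetric because $\Mb$ and $\Sb$ are), and the outer minimum is attained since $\Delta$ is compact and the quadratic form is continuous.

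I do not anticipate a serious obstacle; the one point that must be handled with care is the quantile step, namely verifying that the assumptions collected in Property~\ref{property} are exactly what is needed both to obtain the equivalence $F((t-\mu(\x))/\sigma(\x))\ge\alpha\iff t\ge\mu(\x)+F^{-1}(\alpha)\sigma(\x)$ and to guarantee that $F^{-1}(\alpha)$ is a well-defined finite number, so that $\overline\Qb$ is a legitimate data matrix. Everything else — optimising out $t$, reassembling the quadratic form, and invoking compactness of $\Delta$ for attainment — is routine.
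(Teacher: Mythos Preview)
Your proposal is correct and follows exactly the same route as the paper: rewrite the chance constraint via Property~\ref{property} as the deterministic inequality $t\ge \x\T\overline\Qb\x$ and then recognise the resulting problem as the epigraphic form of $\min_{\x\in\Delta}\x\T\overline\Qb\x$. If anything, you are more careful than the paper about the quantile step (finiteness of $F^{-1}(\alpha)$ and the equivalence $F(s)\ge\alpha\iff s\ge F^{-1}(\alpha)$), which the paper treats as immediate.
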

\begin{proof}
A feasible pair $(\x,t) \in \Delta \times \mathbb R$ satisfies
$$
\mathbb P[\x^{\top}\widetilde \Qb \x \leq t] \geq \alpha \quad \text{ if and only if } \quad
\x^{\top}\overline \Qb \x \leq t.
$$    
Therefore $\eqref{StQP_cce}$ is equivalent to 
\begin{equation*}
\begin{array}{cl}
\underset{\x,t}{\min} & t\\
\text{s.t.} & \mathbb \x^{\top}\overline \Qb\x \leq t\\
& \x \in \Delta\, ,\, t\in \mathbb{R}\,,
\end{array}
\end{equation*}
which is precisely the epigraphic reformulation of the deterministic StQP 
$$
\min_{\x \in \Delta}\, \x^{\top}\overline{\Qb}\x\,.
$$
\end{proof}

Hence we have established a deterministic StQP equivalent to the CCEStQP under reasonable distributional assumptions. This is in line with several robust counterparts~\cite{Bomz20a}. 

\subsection{An important special case: GOE perturbation model}

{\color{black} Let us recapitulate the above result explicitly for the GOE perturbation model:}

\begin{corollary}\label{corgoe}
Let $\widetilde \Qb$ be defined as in \eqref{def_Q_normal}, then
$$
    \x^{\top}\widetilde \Qb\x \sim \mathcal N \left(\mu(\x), \sigma^2(\x)\right)\quad\mbox{with }
$$
$$
 \mu(\x)= \x^{\top}\Qb^{\rm (nom)}\x\mbox{ and }\sigma(\x) = \sqrt{2}\beta\, \x^{\top}\Ib_n\x \, .
$$
Therefore for the GOE perturbation model, the chance-constrained epigraphic problem \eqref{StQP_cce} is equivalent to a deterministic StQP
\begin{equation}\label{StQP_GOE_CCE}
\ell_\alpha^{(cce)}:= \min_{\x \in \Delta} \, \x^{\top}\Qb^{\rm (cce)}\x\,,
\end{equation}
with
$$
\Qb^{\rm (cce)}:= \Qb^{\rm (nom)} + \sqrt{2}\beta\, \Phi^{-1}(\alpha)\,\Ib_n
$$ 
where $\Phi$ denotes the cumulative distribution function of a standard normal random variable. 
\end{corollary}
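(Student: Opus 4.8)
The plan is to recognize the GOE perturbation model as a special case of Property~\ref{property} and then quote Theorem~\ref{mainthm}; the only genuine computation is the one‑line distributional identity for $\x\T\widetilde\Gb\,\x$ already announced in the proof of Proposition~\ref{propgoe}, which I would now justify in full.

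Fix $\x\in\Delta$ (the argument works for any $\x\in\mathbb R^n$). First I would expand the quadratic form in the independent Gaussian entries of $\widetilde\Gb$,
$$
\x\T\widetilde\Gb\,\x \;=\; \sum_{i=1}^n \widetilde g_{ii}\,x_i^2 \;+\; 2\sum_{1\le i<j\le n}\widetilde g_{ij}\,x_ix_j\,,
$$
and observe that, being a linear combination of mutually independent normal variables, it is itself normal with mean $0$. For the variance I would use $\mathrm{Var}(\widetilde g_{ii})=2$, $\mathrm{Var}(\widetilde g_{ij})=1$ for $i<j$, and independence to obtain
$$
\mathrm{Var}\big(\x\T\widetilde\Gb\,\x\big) \;=\; 2\sum_{i=1}^n x_i^4 \;+\; 4\sum_{1\le i<j\le n}x_i^2x_j^2 \;=\; 2\Big(\sum_{i=1}^n x_i^2\Big)^{\!2} \;=\; 2\,\|\x\|_2^4\,,
$$
the middle step being just the expansion of $\big(\sum_i x_i^2\big)^2$. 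Hence $\x\T\widetilde\Gb\,\x\sim\mathcal N(0,2\|\x\|_2^4)$.

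Next, since $\widetilde\Qb=\Qb^{\rm (nom)}+\beta\widetilde\Gb$ with $\Qb^{\rm (nom)}$ deterministic, adding the constant $\x\T\Qb^{\rm (nom)}\x$ and multiplying by $\beta$ (tacitly assumed positive; otherwise replace $\beta$ by $|\beta|$ below) gives $\x\T\widetilde\Qb\,\x\sim\mathcal N\big(\x\T\Qb^{\rm (nom)}\x,\,2\beta^2\|\x\|_2^4\big)$, i.e.\ $\mu(\x)=\x\T\Qb^{\rm (nom)}\x$ and $\sigma(\x)=\sqrt2\,\beta\,\|\x\|_2^2=\sqrt2\,\beta\,\x\T\Ib_n\x>0$ on $\Delta$. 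Standardizing, $\mathbb P[\x\T\widetilde\Qb\x\le t]=\Phi\big((t-\mu(\x))/\sigma(\x)\big)$, so Property~\ref{property} holds with $\Mb=\Qb^{\rm (nom)}$, scale matrix $\Sb=\sqrt2\,\beta\,\Ib_n$ (for which $\x\T\Sb\x>0$ on $\Delta$), and $F=\Phi$, the latter continuous and strictly increasing on all of $\mathbb R$. I would then invoke Theorem~\ref{mainthm}, which gives $\ell^{\rm (cce)}_{\mathbb P,\alpha}=\min_{\x\in\Delta}\x\T\overline\Qb\x$ with $\overline\Qb=\Mb+F^{-1}(\alpha)\Sb=\Qb^{\rm (nom)}+\sqrt2\,\beta\,\Phi^{-1}(\alpha)\,\Ib_n=\Qb^{\rm (cce)}$, which is the assertion.

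I do not expect any real obstacle: the statement is a direct specialization of Theorem~\ref{mainthm}. The only bookkeeping to be careful about is the variance computation, namely that each off‑diagonal entry enters the quadratic form with a factor $2$ but has unit variance, whereas each diagonal entry has variance $2$, and one must check that these contributions recombine into the clean $2\|\x\|_2^4$; from there the identification $\|\x\|_2^2=\x\T\Ib_n\x$ is immediate. A secondary point is the sign convention on $\beta$ (needed for $\sigma(\x)>0$), while $\Phi^{-1}(\alpha)$ may have either sign according to whether $\alpha\gtrless\tfrac12$, so $\Qb^{\rm (cce)}$ can shift $\Qb^{\rm (nom)}$ in either direction.
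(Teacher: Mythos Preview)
Your argument is correct and follows exactly the route the paper takes: the paper's proof is the one line ``Straightforward from Proposition~\ref{propgoe} and Theorem~\ref{mainthm},'' and you have simply unpacked Proposition~\ref{propgoe} by computing $\mathrm{Var}(\x\T\widetilde\Gb\,\x)=2\|\x\|_2^4$ explicitly before identifying $(F,\Mb,\Sb)=(\Phi,\Qb^{\rm (nom)},\sqrt2\,\beta\,\Ib_n)$ and invoking Theorem~\ref{mainthm}. Your remarks on the sign of $\beta$ are in line with the paper, which also takes $\beta>0$ in the proof of Proposition~\ref{propgoe}.
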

\begin{proof} Straightforward from Proposition~\ref{propgoe} and Theorem~\ref{mainthm}.
\end{proof}

We proceed with the announced relation of above result to robust counterparts when uncertainty sets are Frobenius balls {\color{black} under the GOE perturbation model:}

\begin{theorem}\label{thm:ccestqp-and-rstqp}
Under the GOE perturbation model \eqref{def_Q_normal}, define
$$
\rho := \sqrt{2}\beta\, \Phi^{-1}(\alpha)\, .
$$
Then the CCEStQP~\eqref{StQP_GOE_CCE} can be interpreted as the deterministic counterpart of the robust StQP
$$
\min_{\x \in \Delta} \, \max_{\Ub \in \mathcal U} \,\x^{\top}( \Qb^{\rm (nom)}  + \Ub)\x\,, $$
with Frobenius ball uncertainty set
$$
\mathcal U = \{\Ub = \Ub^{\top}: || \Ub ||_F \leq \rho\}\, .
$$
\end{theorem}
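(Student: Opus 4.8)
The plan is to perform the inner maximization over the Frobenius ball $\mathcal U$ explicitly, for each fixed $\x\in\Delta$, and then to recognize the resulting deterministic objective as exactly the quadratic form $\x^\top\Qb^{\rm (cce)}\x$ of Corollary~\ref{corgoe}. Throughout I would assume $\rho\ge 0$, which holds in the natural regime $\beta>0$ and $\alpha\ge\tfrac12$ (so that $\Phi^{-1}(\alpha)\ge 0$); if $\rho<0$ the set $\mathcal U$ is empty and the robust reformulation is vacuous, while if $\rho=0$ both problems reduce to the nominal StQP $\min_{\x\in\Delta}\x^\top\Qb^{\rm (nom)}\x$, so there is nothing to prove.

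Fix $\x\in\Delta$. Writing the quadratic form as a Frobenius inner product, $\x^\top\Ub\x=\operatorname{tr}\bigl(\Ub\,\x\x^\top\bigr)$, the Cauchy--Schwarz inequality in the Frobenius inner product gives
$$
\x^\top\Ub\x\ \le\ \|\Ub\|_F\,\|\x\x^\top\|_F\ \le\ \rho\,\|\x\|_2^2
$$
for every $\Ub\in\mathcal U$, since $\|\x\x^\top\|_F=\|\x\|_2^2$. The one point that deserves a moment's care is that this bound is attained \emph{within} $\mathcal U$: the Cauchy--Schwarz maximizer $\Ub^\star:=\rho\,\x\x^\top/\|\x\|_2^2$ is symmetric because $\x\x^\top$ is (and $\|\x\|_2^2\ge 1/n>0$ on $\Delta$), and $\|\Ub^\star\|_F=\rho$, so $\Ub^\star\in\mathcal U$ and $\x^\top\Ub^\star\x=\rho\,\|\x\|_2^2$. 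Hence $\max_{\Ub\in\mathcal U}\x^\top\Ub\x=\rho\,\x^\top\Ib_n\x$.

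Adding back the nominal term and using $\rho=\sqrt2\,\beta\,\Phi^{-1}(\alpha)$ together with the definition of $\Qb^{\rm (cce)}$ in Corollary~\ref{corgoe}, I would then conclude that for every $\x\in\Delta$,
$$
\max_{\Ub\in\mathcal U}\ \x^\top(\Qb^{\rm (nom)}+\Ub)\x
=\x^\top\Qb^{\rm (nom)}\x+\rho\,\x^\top\Ib_n\x
=\x^\top\Qb^{\rm (cce)}\x .
$$
Taking the minimum over $\x\in\Delta$ on both sides shows that the deterministic counterpart of the robust StQP is precisely $\min_{\x\in\Delta}\x^\top\Qb^{\rm (cce)}\x=\ell_\alpha^{\rm (cce)}$, i.e. problem~\eqref{StQP_GOE_CCE}, which is the assertion. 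There is no genuine obstacle here: the heart of the proof is the one-line Frobenius Cauchy--Schwarz estimate above, and the only things to monitor are the symmetry restriction in $\mathcal U$ (automatically satisfied by the rank-one extremal perturbation) and the sign of $\rho$, which identifies $\alpha\ge\tfrac12$ as the confidence range in which the robust interpretation is meaningful.
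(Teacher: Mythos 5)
Your proof is correct, but it takes a different route from the paper's. The paper proves this theorem by invoking an external result, namely \cite[Theorem 4 and Remark 5]{Bomz20a} (applied with $\Lb:=-\rho\,(\Cb\Cb^{\top})^{-1}$ and $\Cb=\Ib_n$, after flipping the sense of optimization), which directly yields $\max_{\Ub \in \mathcal U} \,\x^{\top}\Ub\x=\rho\,\x^\top\Ib_n\x$ and hence the deterministic counterpart $\min_{\x\in\Delta}\x^\top(\Qb^{\rm (nom)}+\rho\,\Ib_n)\x$. You instead compute the inner maximum from scratch: writing $\x^\top\Ub\x=\operatorname{trace}(\Ub\,\x\x^\top)$ and applying Cauchy--Schwarz in the Frobenius inner product gives the bound $\rho\,\|\x\|_2^2$, attained at the symmetric rank-one matrix $\Ub^\star=\rho\,\x\x^\top/\|\x\|_2^2\in\mathcal U$. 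This is a clean, self-contained, elementary derivation that makes explicit which perturbation achieves the worst case (and your observation that the maximizer is rank-one while the effective worst-case shift $\rho\,\Ib_n$ is \emph{not} in $\mathcal U$ echoes the paper's closing remark); the paper's citation-based argument is shorter and situates the result within the general machinery of \cite{Bomz20a} for ellipsoidal/norm-ball uncertainty sets. Your additional care about the sign of $\rho$ (requiring $\alpha\ge\tfrac12$ so that the Frobenius ball is nonempty) is a legitimate point that the paper leaves implicit; it is consistent with the regime $\alpha>1/2$ used elsewhere in the paper (e.g.\ Proposition~\ref{prop:cce} and the experiments).
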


\begin{proof}
Recall that the Frobenius norm is defined by $ ||\Ub||_F^2 := \operatorname{trace}(\Ub^{\top}\Ub)$. We use~\cite[Theorem 4 and Remark 5]{Bomz20a}, defining $\Lb:=-\rho\,(\Cb\Cb^{\top})^{-1}$ with $\Cb=\Ib_n$. Flipping the sense of optimization, we obtain the deterministic counterpart of the robust problem with uncertainty set $\mathcal U$
$$
\min_{\x \in \Delta} \, \max_{\Ub \in \mathcal U} \,\x^{\top}(\Qb^{\rm (nom)} + \Ub)\x = \min_{\x \in \Delta} \, \x^{\top}( \Qb^{\rm (nom)} +\rho\,\Ib_n)\x\, ,
$$
which is exactly~\eqref{StQP_GOE_CCE}. {\color{black} Note that $\rho\,\Ib_n$ is not an element of $\mathcal U$ for $n>1$ since $||\,\Ib _n\,||_F^2 = n$.}
\end{proof}

Observe that a similar result for the shifted Wishart model seems not to hold because of the negative shift in
$\Mb  = -\eta\, \Ib_n$. We expect that the more general distributional assumption of Property~\ref{property} would neither allow for such a characterization.

{\color{black} This is not the first time that a chance-constrained optimization problem is reformulated as a deterministic counterpart of a robust optimization problem, see  \cite[Chapters 2 and 4]{ben2009robust} and \cite[Section 3]{bertsimas2011theory}. However, \cite{ben2009robust,bertsimas2011theory} discuss models with chance constraints both linear in the uncertain parameter and linear in the decision variable. By contrast, in our case, even though the function $f(\x,\widetilde \Qb) = \x^{\top}\widetilde \Qb \x$ is linear in the random parameter $\widetilde\Qb$, it is nonlinear in the decision variable $\x$. Thus, Theorem \ref{thm:ccestqp-and-rstqp} distinguishes itself from what can be found in the literature.}

We close this section discussing a possible convexifying effect by passing from the indefinite nominal StQP to the 
CCEStQP.
Let $\lambda_{\rm max}$ and $\lambda_{\rm min}$ denote the largest and smallest eigenvalues of the nominal matrix $\Qb^{\rm (nom)}$, respectively. We want to study the cases where the here-and-now problem \eqref{StQP_GOE_han} is indefinite while the chance-constrained epigraphic problem \eqref{StQP_GOE_CCE} is convex. 

\begin{proposition}\label{prop:cce}
    Let $\alpha > 1/2$, $\beta > 0$, $\lambda_{\rm max} > 0 > \lambda_{\rm min}$ and $\Phi$ denote again the cumulative distribution function of the standard normal distribution. Then
\begin{equation*}
\Qb^{\rm (cce)}\mbox{ is positive semi-definite } \iff \alpha \geq \Phi\left(\frac{|\lambda_{\rm min}|}{\sqrt{2}\beta}\right) \, .
\end{equation*}
\end{proposition}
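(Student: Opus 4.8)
The plan is to reduce the positive semi-definiteness of $\Qb^{\rm (cce)}$ to a scalar inequality on its smallest eigenvalue, and then to undo the monotone transformations to arrive at the stated threshold on $\alpha$.

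First I would invoke Corollary~\ref{corgoe} to write $\Qb^{\rm (cce)} = \Qb^{\rm (nom)} + \sqrt{2}\beta\,\Phi^{-1}(\alpha)\,\Ib_n$. Since adding a scalar multiple of the identity shifts the whole spectrum, the eigenvalues of $\Qb^{\rm (cce)}$ are exactly $\lambda_i + \sqrt{2}\beta\,\Phi^{-1}(\alpha)$, where $\lambda_1,\dots,\lambda_n$ denote the eigenvalues of $\Qb^{\rm (nom)}$. Hence $\Qb^{\rm (cce)}$ is positive semi-definite if and only if its least eigenvalue is nonnegative, i.e. if and only if $\lambda_{\rm min} + \sqrt{2}\beta\,\Phi^{-1}(\alpha) \geq 0$.

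Next I would rearrange. Because $\lambda_{\rm min} < 0$ by hypothesis, $-\lambda_{\rm min} = |\lambda_{\rm min}|$, so the inequality reads $\sqrt{2}\beta\,\Phi^{-1}(\alpha) \geq |\lambda_{\rm min}|$. The hypotheses $\alpha > 1/2$ and $\beta > 0$ guarantee $\Phi^{-1}(\alpha) > 0$ and $\sqrt{2}\beta > 0$, so I may divide to obtain the equivalent condition $\Phi^{-1}(\alpha) \geq |\lambda_{\rm min}|/(\sqrt{2}\beta)$. Finally, applying $\Phi$, which is continuous and strictly increasing on $\mathbb{R}$, preserves the inequality in both directions, yielding $\alpha \geq \Phi\!\left(|\lambda_{\rm min}|/(\sqrt{2}\beta)\right)$, as claimed.

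The argument is essentially bookkeeping with spectral shifts, so I do not anticipate a real obstacle; the only points requiring care are the sign conventions. Dividing through by $\sqrt{2}\beta\,\Phi^{-1}(\alpha)$ is legitimate precisely because $\alpha>1/2$ and $\beta>0$ force this quantity to be strictly positive, and rewriting $-\lambda_{\rm min}$ as $|\lambda_{\rm min}|$ uses $\lambda_{\rm min}<0$. I would also note that the assumption $\lambda_{\rm max}>0$ is not needed for the equivalence itself; it is stated only to ensure the nominal here-and-now StQP~\eqref{StQP_GOE_han} is genuinely indefinite, which is the case of interest motivating the proposition.
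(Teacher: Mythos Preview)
Your argument is correct and is exactly the natural one: shift the spectrum of $\Qb^{\rm (nom)}$ by $\sqrt{2}\beta\,\Phi^{-1}(\alpha)$, impose nonnegativity of the smallest eigenvalue, and invert the strictly increasing map $\Phi$. The paper in fact states this proposition without proof, so there is nothing to compare against beyond noting that your derivation is the intended elementary verification. One tiny slip in your closing commentary: you divide by $\sqrt{2}\beta$, not by $\sqrt{2}\beta\,\Phi^{-1}(\alpha)$; the positivity of $\Phi^{-1}(\alpha)$ is irrelevant for that step (and indeed the equivalence would persist even for $\alpha\le 1/2$, with both sides then trivially false).
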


\section{Numerical experiments}\label{sec:numerical_experiments}
\noindent 
{\color{black}We carried out 
experiments for the model discussed in Corollary~\ref{corgoe}.} 
All results were computed using {\tt Gurobi} {\color{black}v.11.0.2.} Non-convex StQP instances were {\color{black} rewritten in bilinear form and then solved by spatial branching} with 60 seconds maximum runtime and gap tolerance $10^{-6}$. 

\subsection{Nominal instance generation}
\noindent We first set $n=30$ and generated 10 i.i.d.\ symmetric nominal matrices $\Qb_1^{(\rm nom)},\dots,\Qb_{10}^{(\rm nom)}$ component-wise from the uniform distribution on [0,1]
$$
(\Qb_i^{(\rm nom)})_{k\ell} \sim {\color{black}U}_{[0,1]} \text{ for } i = 1,\dots,10 \text{ and } 1 \leq k \leq \ell \leq n,
$$
$$
(\Qb_i^{(\rm nom)})_{k\ell} = (\Qb_i^{(\rm nom)})_{\ell k} \mbox{ if } k>\ell\, .
$$
All generated matrices $\Qb_i^{(\rm nom)}$ were indefinite with $\lambda_{\rm min}(\Qb_i^{(\rm nom)}) \in [-3.2,-2.7]$ and \\
$\lambda_{\rm max}(\Qb_i^{(\rm nom)}) \in [14.4,15.8]$. 
\subsection{Random data generation}
We chose $\beta = 3$ and generated 100 i.i.d.\ GOE matrices $\Gb_1,\dots,\Gb_{100}$. All realizations 
$$
\Qb_{ij}:=\Qb_i^{(\rm nom)} + \beta\,\Gb_j, \quad i=1,\dots,10, \, j = 1,\dots,100\, ,
$$
were indefinite as well, 
with
$
\lambda_{\rm min}(\Qb_{ij}) \in [-35.9,-25.9]$ and 
$\lambda_{\rm max}(\Qb_{ij}) \in [25.7,41.2]$. We considered confidence levels $\alpha \in \{0.55, 0.56,\dots, 0.99\}$ and defined
$$
\Qb^{\rm (cce)}_{i,\alpha}:= \Qb^{\rm (nom)}_i +  \sqrt{2}\beta\, \Phi^{-1}(\alpha)\,\Ib_n\, ,
$$ 
according to \eqref{StQP_GOE_CCE}. By Proposition \ref{prop:cce} since 
$$
\Phi\left(\frac{|\lambda_{\rm min}(\Qb^{\rm (nom)}_i)|}{\sqrt{2}\beta}\right) \leq \Phi\left(\frac{3.2}{3\sqrt{2}}\right) \approx 0.77\,,
$$
the $\Qb^{\rm (cce)}_{i,\alpha}$ were positive-definite approximately for $\alpha \geq 0.77$ and indefinite otherwise.

\subsection{Comparing solutions to the nominal with solutions to random instances}
We then solved problem \eqref{StQP_GOE_CCE} with for all $i$ and $\alpha$ and obtained solution pairs $(\x^{\rm (cce)}_{i,\alpha},t^{\rm (cce)}_{i,\alpha})$. 
We confirmed that the empirical probabilities are close to $\alpha$, {\color{black}i.e. 
$$
\frac{\#\Big\{j:\x^{\rm (cce)\top}_{i,\alpha}\Qb_{ij}\x^{\rm (cce)}_{i,\alpha}\leq t^{\rm (cce)}_{i,\alpha}\Big\}}{100} \approx \alpha\,,
$$
for all $i$ and $\alpha$, where $\#$ counts the number of elements in a set.} (In case $\alpha = 70\%$ they range in the interval $[67\%,77\% ]$). Then we solved all instances $\Qb_{ij}$ yielding
optimal values $\displaystyle\ell(\Qb_{ij})=\min _{\x\in \Delta} \x\T\Qb_{ij}\x$. For each instance $\Qb_{ij}$, either the global optimum or a local optimum within tolerance was found. 

\noindent For comparison, we report on the values $\x^{\rm (cce)\top}_{i,\alpha}\Qb^{\rm (nom)}_i\x^{\rm (cce)}_{i,\alpha}$ and $\ell(\Qb^{\rm (nom)}_i)$, and also compare the empirical values
$\x^{\rm (cce)\top}_{i,\alpha}\Qb_{ij}\x^{\rm (cce)}_{i,\alpha}$ and $\ell(\Qb_{ij})$, accumulated across all $j=1\dots,100$, and also summarized across all instances $i$, to avoid any instance selection bias. To be more precise, let 
$$
\ell^{\rm (nom)} := \frac{1}{10}\sum_{i=1}^{10}\ell(\mathsf Q_i^{\rm (nom)}),
$$
$$\ell^{\rm (emp)} := \frac{1}{1000}\sum_{i=1}^{10}\sum_{j=1}^{100}\ell(\mathsf Q_{ij}),
$$

$$
 \ell^{\rm (nom)}_{\rm cce,\alpha} := \frac{1}{10}\sum_{i=1}^{10}\mathbf x^{\rm (cce) \top}_{i,\alpha} \mathsf Q_i^{\rm (nom)}\mathbf x^{\rm (cce)}_{i,\alpha},
 $$
 $$
\ell^{\rm (emp)}_{\rm cce,\alpha} := \frac{1}{1000}\sum_{i=1}^{10}\sum_{j=1}^{100}\mathbf x^{\rm (cce) \top}_{i,\alpha} \mathsf Q_{ij}\mathbf x^{\rm (cce)}_{i,\alpha}\, .
$$

\begin{figure}[H]
    \centering
    \begin{minipage}{0.5\textwidth}
        \centering  
\includegraphics[width=0.95\textwidth]{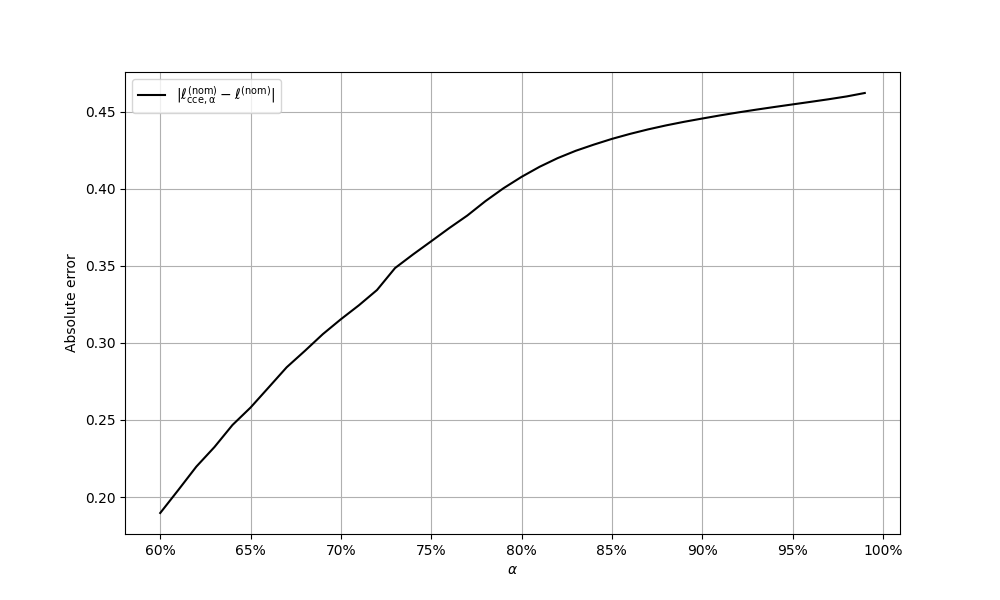}
    \caption{$|\ell^{\rm (nom)}_{\rm cce,\alpha} - \ell^{\rm (nom)}|$}
        \label{fig:1}
    \end{minipage}\hfill
    \begin{minipage}{0.5\textwidth}
        \centering   
\includegraphics[width=0.95\textwidth]{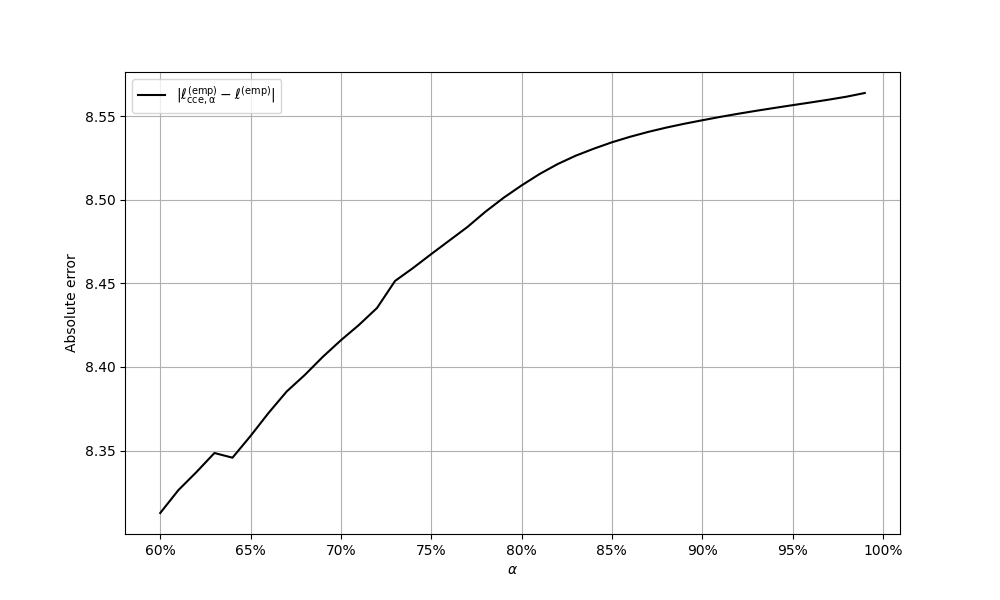}
\caption{$|\ell^{\rm (emp)}_{\rm cce,\alpha} - \ell^{\rm (emp)}|$}
        \label{fig:2}
    \end{minipage}
\end{figure}

{\color{black}{
\noindent Figure \ref{fig:1} shows the (average) absolute error of the solutions $\mathbf x^{\rm (cce)}_{i,\alpha}$ with respect to the optimal solutions of $\ell(\mathsf Q_i^{\rm (nom)})$ as a function of $\alpha$, i.e. the differences $|\ell^{\rm (nom)}_{\rm cce,\alpha} - \ell^{\rm (nom)} |$ while Figure \ref{fig:2} illustrates the (average) absolute error of the solutions $\mathbf x^{\rm (cce)}_{i,\alpha}$ with respect to the optimal solutions of $\ell\left(\mathsf Q_{ij}\right)$ as a function of $\alpha$, i.e. the differences $|\ell^{\rm (emp)}_{\rm cce,\alpha} - \ell^{\rm (emp)}|$. As shown in Figures \ref{fig:1} and \ref{fig:2}, the function values provided by the solutions $\mathbf x^{\rm (cce)}_{i,\alpha}$ are far away from the optimal values of $\displaystyle\min _{\x\in \Delta} \x\T\mathsf Q_i^{\rm (nom)}\x$ and $\displaystyle\min _{\x\in \Delta} \x\T\mathsf Q_{ij}\x$, respectively. However, $\mathbf x^{\rm (cce)}_{i,\alpha}$ are sometimes less conservative  compared to  robust methods, as showcased in the following section.}

\subsection{Comparing conservativity}
In the following experiment inspired by~\cite{Bomz20a}, 
} we constructed, for each scenario $i=1,\dots,10$, a robust StQP \eqref{robust_StQP} each with the choice of a box uncertainty set of the form
\begin{align}\label{uncertainty_set}
    \mathcal U_i = \left\{\Ub = \Ub^{\top} : \rho (\underline{\Qb_{i}} -\Qb^{\rm (nom)}_i) \leq  \Ub \leq  \rho (\overline{\Qb_{i} }-\Qb^{\rm (nom)}_i)\right\} \,, 
\end{align}

\noindent where 
$$
(\underline{\Qb_{i}})_{k\ell}  := \min_{j} \, (\Qb_{ij})_{k\ell}, \quad \quad (\overline{\Qb_{i}})_{k\ell}  := \max_{j} \, (\Qb_{ij})_{k\ell}
$$
defines a box covering all realizations (depending on the instance number $i$), and $\rho\in (0,1]$ is a parameter controlling the size of $\mathcal U_i$. By application of~\cite[Theorem~3]{Bomz20a} to the 
 constructed uncertainty sets $\mathcal U_i$, we obtain
\begin{align*} 
\min_{\x \in \Delta} &\, \max_{\Ub \in \mathcal U_i} \,\x^{\top}(\Qb_i^{\rm (nom)} + \Ub)\x\\
&= \min_{\x \in \Delta} \left [\x^{\top}\Qb_i^{\rm (nom)}\x + \max_{\Ub \in \mathcal U_i} \,\x^{\top}\Ub\x\right ] \\
&= \min_{\x \in \Delta} \,\x^{\top}\Qb_i^{\rm (nom)}\x + \rho\,\x^{\top}( \overline{\Qb_{i}} -\Qb_i^{\rm (nom)} )\x\\
&=  \min_{\x \in \Delta} \,\x^{\top}[(1-\rho)\,\Qb_i^{\rm (nom)} + \rho \,\overline{\Qb_{i}}]\x\, .
\end{align*}
{\color{black} If $\rho=1$, the uncertainty set $\mathcal U_i $ would cover all realizations (in our simulation case randomly generated); to avoid any bias against the robust model in case of outliers among the realizations which would incur overly conservative solutions, we decided to}
set $\rho = 0.8$ and denote by $\x_{i}^{\rm (rob)}$ the robust solution  obtained from the robust StQPs \eqref{robust_StQP} with uncertainty sets  \eqref{uncertainty_set}. Let 
$$
\ell^{\rm (nom)}_{\rm rob }  := \frac{1}{10}\sum_{i=1}^{10}\mathbf x_{i}^{\rm (rob)\top}\mathsf Q_i^{\rm (nom)}\mathbf x_{i}^{\rm (rob)}\,,
$$

$$
\ell^{\rm (emp)}_{\rm rob} := \frac{1}{1000}\sum_{i=1}^{10}\sum_{j=1}^{100}\mathbf x_{i}^{\rm (rob)\top}  \mathsf Q_{ij}\mathbf x_{i}^{\rm (rob)} \, . 
$$

{\color{black}
We compare the obtained realized feasible values 
in a similar way as above.   
Figure \ref{fig:3} depicts the (average) absolute error of the solutions $\mathbf x^{\rm (cce)}_{i,\alpha}$ and  
$\mathbf x^{\rm (rob)}_{i}$ with respect to the optimal solutions of $\ell(\mathsf Q_i^{\rm (nom)})$, i.e. the differences $|\ell^{\rm (nom)}_{\rm cce,\alpha} - \ell^{\rm (nom)} |$ as a solid line and $|\ell^{\rm (nom)}_{\rm rob} - \ell^{\rm (nom)}|$ as a dashdotted line and 
Figure \ref{fig:4} presents the (average) absolute error of the solutions $\mathbf x^{\rm (cce)}_{i,\alpha}$ and  
$\mathbf x^{\rm (rob)}_{i}$ with respect to the optimal solutions of $\ell(\mathsf Q_{ij})$, i.e. the differences $|\ell^{\rm (emp)}_{\rm cce,\alpha} - \ell^{\rm (emp)} |$ as a solid line and $|\ell^{\rm (emp)}_{\rm rob} - \ell^{\rm (emp)}|$ as a dashdotted line.

\begin{figure}[H] 
    \centering
    \begin{minipage}{0.5\textwidth}
        \centering
    \includegraphics[width=0.95\textwidth]{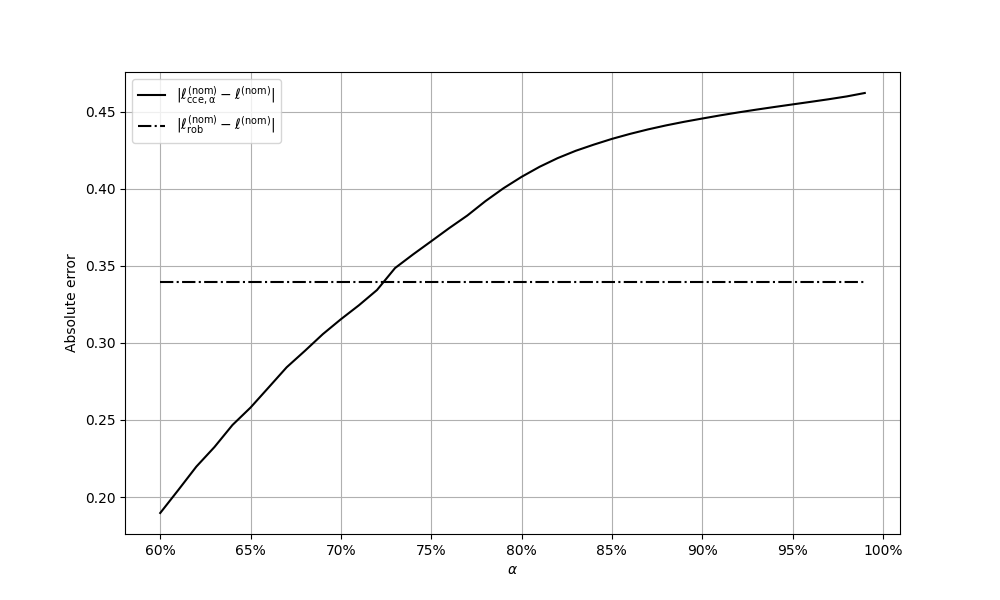}
        \caption{$|\ell^{\rm (nom)}_{\rm cce,\alpha} - \ell^{\rm (nom)}|$ vs $|\ell^{\rm (nom)}_{\rm rob } - \ell^{\rm (nom)}|$}
        \label{fig:3}
    \end{minipage}\hfill
    \begin{minipage}{0.5\textwidth}
        \centering
    \includegraphics[width=0.95\textwidth]{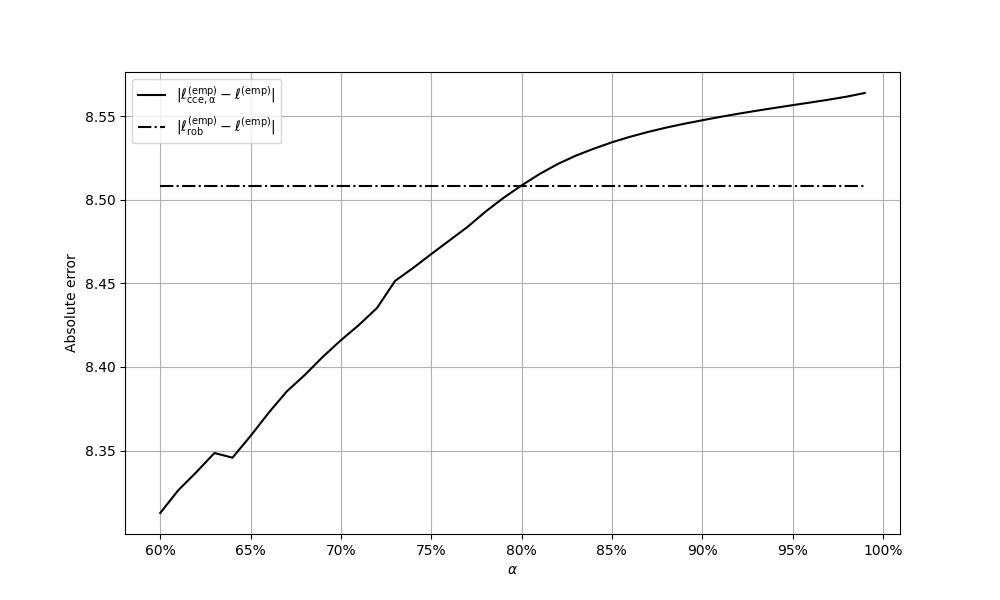}
        \caption{$|\ell^{\rm (emp)}_{\rm cce,\alpha}-\ell^{\rm (emp)}|$ vs $|\ell^{\rm (emp)}_{\rm rob}-\ell^{\rm (emp)}|$}
        \label{fig:4}
    \end{minipage}
\end{figure}

\noindent As one can see in Figure \ref{fig:3}, for the nominal problem $\ell(\Qb_i^{\rm (nom)})$ the chance-constrained epigraphic solutions $\mathbf x^{\rm (cce)}_{i,\alpha}$ are less conservative than the robust solutions $\mathbf x^{\rm (rob)}_{i}$ for all $\alpha \leq 0.72$. For the empirical problem $\ell(\Qb_{ij})$, Figure \ref{fig:4} shows that any $\alpha \leq 0.79$ yields less conservative chance-constrained epigraphic solutions than the robust solutions. }

\section{Conclusion}
\noindent We introduced and motivated the chance-constrained epigraphic StQP, a new model for solving uncertain StQPs under distributional assumptions, and established a deterministic counterpart as an instance in the same problem class (another StQP). Our findings parallel similar observations on robust StQPs, and indeed a special variant of this model is equivalent  to a particular robust formulation. However, preliminary experiments seem to suggest that the chance-constrained epigraphic StQP can be less conservative than a robust approach, if the confidence level (probabilistic optimality guarantee) is not too large.

\section*{Acknowledgements}
\noindent The authors thank Christa Cuchiero and Abdel Lisser for stimulating discussions and valuable suggestions. Both authors are indebted to VGSCO for financial support enabling presentation of this work at various major conferences: EurOpt 2024, EURO 2024 and ISMP 2024. The authors profited from the feedback of delegates to these meetings, and hope that it may lead to future extensions of the present findings. {\color{black} Last but not least, the authors are grateful to the evaluation team (reviewer and editor) whose thoughtful and constructive comments significantly contributed to improvements over an earlier version of this paper.}




\small

\bibliography{bibliography.bib}

\end{document}